\newtheorem{thm}{Theorem}[section]
\newtheorem{lem}[thm]{Lemma}
\newtheorem{cor}[thm]{Corollary}
\theoremstyle{definition}
\theoremstyle{remark}
\newtheorem{remark}[thm]{Remark}
\numberwithin{equation}{section} 
\DeclareMathOperator{\N}{\mathbb{N}}  
\DeclareMathOperator{\e}{e}                  
\DeclareMathOperator{\dtau}{d\it{\tau}}  
\DeclareMathOperator{\ds}{d\it{s}}          
\DeclareMathOperator{\dt}{d\it{t}}          
\DeclareMathOperator{\dW}{\mathrm{d_W}} 
\DeclareMathOperator{\dS}{\mathrm{d_S}} 
\DeclareMathOperator{\g}{\gamma}
\DeclareMathOperator{\la}{\lambda}
\DeclareMathOperator{\ddt}{\frac{d}{d\textit{t}}} 
\DeclareMathOperator{\ddtau}{\frac{d}{d\textit{$\tau$}}}
\begin{document}

\title[Regularizing Effect in the Dyadic Model]{Regularizing Effect of the Forward Energy Cascade in the Inviscid Dyadic Model}
\author{Alexey Cheskidov and Karen Zaya}
\address{Department of Mathematics, Statistics, and Mathematical Computer Science. University of Illinois at Chicago. 851 South Morgan Street (M/C 249). Chicago IL, 60607}
\email{acheskid@uic.edu, kzaya2@uic.edu}

\begin{abstract}
We study the inviscid dyadic model of the Euler equations and prove some regularizing properties of the nonlinear term that occur due to forward energy cascade. We show every solution must have $\frac{3}{5}$ $L^2$-based (or $\frac{1}{10}$ $L^3$-based) regularity for all positive time. We conjecture this holds up to Onsager's scaling, where the $L^2$-based exponent is $\frac{5}{6}$ and the $L^3$-based exponent is $\frac{1}{3}$.
\end{abstract}
\subjclass[2010]{Primary 35Q31, 76B03. Key words: dyadic model, regularity, Euler equations, Onsager's conjecture.}

 \maketitle


 
\section{Introduction} 
Consider the Navier-Stokes equations 
\begin{align} \label{eq:NSE}
&\frac{\partial u}{\partial t} + (u \cdot \nabla) u = -\nabla p  + \nu \Delta u,\\
&\nabla \cdot u = 0,
\end{align}
where $u$ is the velocity vector field, $p$ is pressure, and $\nu  > 0$ is the viscosity coefficient. Regularity of the three-dimensional Euler equations (when $\nu = 0$) and the Navier-Stokes equations continue to be compelling questions. In this paper we examine the inviscid dyadic model, which shares important characteristics with the 3D Euler equations, namely formal conservation of energy and the scaling properties of the nonlinear term. The role of the nonlinear term is pivotal in the study of turbulent flows. The basic principle proposed by Kolmogorov \cite{MR1124922} behind turbulence is forward energy cascade.  Simply put, the theory asserts that energy moves from large to small scales until it reaches the dissipation range, where the viscous forces dominate. For the Navier-Stokes equations, the dissipation range is the only tool used to prove regularity of solutions, but the forward energy cascade might also be a mechanism to regularize solutions. For quasilinear scalar equations, the regularizing property of the nonlinear term has been studied by Tadmor and Tao in \cite{MR2342955}, but such results remain out of reach for the Euler or Navier-Stokes equations. Very recently, Tao \cite{TaoBlowup} proved blow-up for averaged Navier-Stokes equations by reducing the equations to a more complicated dyadic model where he introduced a delay in energy cascade. This delay seems to destroy the regularizing effect of the nonlinear term studied in this paper and produces a strong blow-up.

Shell models are designed to capture energy cascade in turbulent fluid flows. The dyadic model is a specific example where the nonlinearity is simplified to reflect just the local interactions between neighboring scales. It was initially introduced in 1974 by Desnianskii and Novikov \cite{Desnianskii1974468} in the context of oceanography. Other derivations have been since developed and we refer the reader to \cite{MR2522972} for a more detailed explanation via Littlewood-Paley decomposition. Further mathematical analysis has led to several other results in the last decade, see for example \cite{MR2746670}, \cite{MR2844828}, \cite{MR2415066}, \cite{MR2038114}, \cite{MR2095627}, and \cite{MR2180809}.

The inviscid dyadic model is an infinite system of nonlinearly coupled ordinary differential equations constructed to mimic the behavior of the energy of solutions to the Euler equations in dyadic shells. In \cite{MR2422377}, Cheskidov, Constantin, Friedlander, and Shvydkoy examined the energy flux $\Pi_j$ due to the nonlinearity in the Euler equations through the shell of radius $\la_j=2^j$ and obtained the bound
\[
|\Pi_j| \lesssim \sum_{i=-1}^\infty \la^{-\frac{2}{3}}_{|j-i|}\la_i \| \Delta_i u \|_3^3,
\]
where $\Delta_i u$ is a Littlewood-Paley piece of $u$. Recall Bernstein's inequality:
\[
\|\Delta_j u\|_q \lesssim \la_j^{3(\frac{1}{p}-\frac{1}{q})} \|\Delta_j u\|_p, \hspace{5mm} 1\le p \le q .
\]
We assume $\|\Delta_j u\|_3 \sim \la_j^\beta \|\Delta_j u\|_2$ where $\beta \in [0, \frac{1}{2}] $ is the intermittency parameter. Kolmogorov's regime corresponds to $\beta = 0$, whereas $\beta = \frac{1}{2}$ gives extreme intermittency. Denote the total energy in the $j^{th}$ shell by $a_j^2(t)$. As in \cite{MR2522972}, assuming only local interactions and extreme intermittency, we model the flux through the $j^{th}$ shell of radius $\la_j$ as $\Pi_j = \la_j^\frac{5}{2} a_j^2 a_{j+1}$. This leads to the following inviscid system
\begin{align} \label{eq:aj}
\begin{split}
&\ddt a_j(t) = \la^\frac{5}{2}_{j-1} a_{j-1}^2(t) -\la^\frac{5}{2}_j a_j (t)a_{j+1}(t), \hspace{1cm} j = 1, 2, ... \\
&a_0(t) = 0,
\end{split}
\end{align}
with initial conditions $a_j(0) = a_j^0$ for $j= 0, 1, ...$. 	

Kolmogorov predicted energy cascade produces dissipation anomaly, which is characterized by the persistence of non-vanishing energy dissipation in the limit of vanishing viscosity. This phenomenon is possibly related to anomalous dissipation, failure of the energy to be conserved despite the absence of viscosity. Onsager conjectured that sufficiently rough solutions to Euler's equation can exhibit anomalous dissipation, however if the solution is smooth enough, then the energy should be conserved \cite{MR0036116}. Anomalous dissipation and loss of regularity a priori seem unconnected, but a more discernible relationship exists in the context of the inviscid dyadic model and Onsager scaling. 

In \cite{MR2337019} and \cite{MR2600714}, Cheskidov, Friedlander, and Pavlovi\'{c} showed that all the solutions of the forced inviscid dyadic model must have Onsager's regularity almost everywhere in time and confirmed anomalous dissipation and dissipation anomaly. They also showed that all solutions blow-up in finite time in $H^\frac{5}{6}$. On the other hand, all solutions are in $H^\theta$ for almost all time for $\theta <\frac{5}{6}$. In \cite{MR3057168}, Barbato and Morandin studied the unforced inviscid model and showed Onsager regularity almost everywhere, as well. In addition, they demonstrated that solutions remain in $H^{\frac{1}{2}-}$ for all time.  We improve their result by showing that regularity even closer to Onsager's is retained (see comments after Theorem \ref{thm:reg}). It is natural to conjecture that every solution must have exactly Onsager's regularity for all positive time.  

The main results are in Section 4, where we show
\begin{thm} \label{thm:reg}
For any positive solution to \eqref{eq:aj} with initial condition $a(0)$ in $l^2$, 
\begin{align} \label{eq:sup}
\sup_{j} \la_j^\theta a_j(t) < \infty
\end{align} 
for $t>0$ and $\theta = \frac{3}{5}$.
\end{thm}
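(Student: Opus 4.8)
The plan is to exploit two consequences of positivity and finite energy, and then to convert time-averaged control of the energy flux into a pointwise-in-frequency bound. First I would record the a priori bounds. Writing the flux through the $j$-th shell as $\Pi_j=\la_j^{5/2}a_j^2 a_{j+1}\ge 0$, the partial energies $\phi_J(t)=\sum_{i\le J}a_i^2$ satisfy $\ddt\phi_J=-2\Pi_J\le 0$, since the model telescopes and $a_0\equiv 0$. Hence $\sum_j a_j^2(t)\le\sum_j (a_j^0)^2=:E_0$, so in particular $a_j(t)\le\sqrt{E_0}$ for every $j$ and $t$, and, integrating the identity for $\phi_J$,
\[
\int_0^t\Pi_J(s)\,ds=\tfrac12\bigl(\phi_J(0)-\phi_J(t)\bigr)\le\tfrac12 E_0
\]
uniformly in $J$ and $t$. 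This last estimate is the quantitative form of the forward energy cascade: at every scale the time-integrated flux is controlled by the initial energy alone, and this is the sole mechanism I would use to regularize.

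The heart of the argument is to upgrade this averaged statement to a pointwise bound $a_J(t)\lesssim\la_J^{-\theta}$. The difficulty is that $\int_0^t\Pi_J$ controls the triple product $a_J^2a_{J+1}$ on average, not the value $a_J(t)$ at a single instant. I would control the temporal oscillation of $a_J$ using the equation: from \eqref{eq:aj} and the energy bound, $|\ddt a_J|\le\la_{J-1}^{5/2}a_{J-1}^2+\la_J^{5/2}a_J a_{J+1}\lesssim\la_J^{5/2}E_0$, so a value $a_J(t)=M$ cannot have appeared instantaneously: necessarily $a_J(s)\ge M/2$ on a backward interval of length $\gtrsim M\la_J^{-5/2}E_0^{-1}$. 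On such an interval the large value of $a_J$ acts as a source in the equation for $a_{J+1}$, forcing $a_{J+1}$ to grow, which in turn makes the flux $\Pi_J$ (or a neighboring flux) large; comparing this contribution with the uniform cap $E_0/2$ produces an inequality bounding a power of $M$ times a positive power of $\la_J$ by the energy. Solving for $M$ yields a decay rate in $J$.

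To reach $\theta=\tfrac35$ I expect to run this as a bootstrap: feed a provisional uniform rate $a_i\lesssim\la_i^{-\theta_k}$ back into the source term, sharpening the oscillation bound to $\la_{J-1}^{5/2}a_{J-1}^2\lesssim\la_J^{5/2-2\theta_k}$, and then re-optimize the balance between the minimal spike duration $\sim M\la_J^{-(5/2-2\theta_k)}$ and the flux constraint. The exponent $\tfrac35$ should emerge as the balance point of this scheme, with the base case supplied by the bare energy bound and the positive time $t$ entering through the requirement that the backward spike interval fit inside $(0,t)$.

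The step I expect to be the main obstacle is precisely this passage from integrated to pointwise control, i.e. ruling out that $a_J$ concentrates its mass in very short, tall spikes that evade the averaged cap; handling the draining factor $a_{J+1}$ in the triple product $a_J^2a_{J+1}$, which requires a lower bound on the mode one step up, is where the estimates must lose efficiency. This loss is exactly the gap between the rigorous $\theta=\tfrac35$ and the stationary scaling $\la_j^{-5/6}$, which setting $\ddt a_j=0$ in \eqref{eq:aj} identifies as the critical rate and which the paper conjectures to be optimal.
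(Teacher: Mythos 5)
Your opening observations (positivity, monotone partial energies, the uniform bound $\int_0^t\Pi_J\le\tfrac12|a(0)|^2$, and the ``a spike of height $M$ at scale $J$ must persist on a backward interval and therefore pumps up $a_{J+1}$'' idea) are all sound, and the last of these is genuinely the same germ as Steps 1--2 of the paper's key lemma. But the way you propose to close the argument has a real gap. You want to contradict the integrated flux cap: over the spike interval of length $\tau\sim M\la_J^{-5/2}E_0^{-1}$ you get $a_{J+1}\gtrsim\la_J^{5/2}M^2\tau\sim M^3E_0^{-1}$, hence $\int\Pi_J\gtrsim\la_J^{5/2}M^2\cdot M^3E_0^{-1}\cdot\tau\sim M^6E_0^{-2}$, and comparing with $E_0/2$ gives only $M\lesssim E_0^{1/2}$: every power of $\la_J$ cancels, because the flux constraint is exactly scaling-critical for this model. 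The bootstrap does not rescue this; feeding in $a_i\lesssim\la_i^{-\theta_k}$ lengthens the spike to $\tau\sim M\la_J^{2\theta_k-5/2}$ and the same computation yields $M^6\la_J^{4\theta_k}\lesssim E_0$, i.e.\ $\theta_{k+1}=\tfrac23\theta_k$, which iterates \emph{down} toward $0$ rather than up toward $\tfrac35$. So the central promised step --- ``comparing this contribution with the uniform cap produces a positive power of $\la_J$'' --- does not materialize, and there is no mechanism in your sketch from which the specific exponent $\tfrac35$ could emerge.

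The paper closes the loop differently: the pumped-up neighbor $a_{J+1}$ is used not to inflate the flux but as a \emph{damping coefficient} in the equation for $a_J$ itself. In rescaled variables $c_j=\la^{2\theta-5/2}\la_j^\theta a_j$ one runs a first-crossing-time contradiction on the Galerkin system: if $c_n$ is the first mode to reach $1$, the backward persistence forces $c_{n+1}(t_0)\ge B\approx 0.447$, and then Gronwall applied to $\ddt c_n\le \hat c_{n-1}^2-\g c_n\tilde c_{n+1}$ (with explicit lower bounds for $\tilde c_{n+1}$ and upper bounds for $\hat c_{n-1}$) shows $c_n$ never reaches $1$; the admissible $\theta$ is determined by explicit numerical constants ($k=0.96$, $B=0.447$) in that computation, not by a scaling balance. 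Two further ingredients you would still need are absent from your sketch: the scaling symmetry $a(t)\mapsto\eta a(\eta t)$, which converts the resulting small-data invariant region ($\sup_j c_j(0)\le\delta$) into the statement for arbitrary bounded $\sup_j\la_j^\theta a_j(0)$; and, to treat general $l^2$ data, a Barbato--Morandin-type measure estimate producing a time $s\in[0,t]$ at which $\sup_j\la_j^\theta a_j(s)\lesssim|a(0)|^{2/3}t^{-1/3}$, after which the invariant region propagates the bound forward to time $t$. Your remark about the backward interval fitting inside $(0,t)$ gestures at this last point but is not a substitute for it.
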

Barbato and Morandin proved the theorem for $\theta = \frac{1}{2}$ by finding an 
invariant region for solutions. The method presented in this paper is different as we use a 
more dynamical approach which allows us to improve regularity  for values of $\theta$ up 
to $\frac{3}{5}$. The ultimate goal would be to show regularity for values of $\theta$ up to 
$\frac{5}{6}$, which corresponds to Onsager's scaling. 

\begin{remark}
As a comparison to $L^3$-based regularity, our result \eqref{eq:sup} can be written as
\[
\sup_j \la_j^{q+\beta} a_j(t) < \infty
\]
for $q= \frac{1}{10}$. The ultimate Onsager scaling is $q = \frac{1}{3}$.
\end{remark}

\section{Energy Conservation and Onsager's Conjecture}
Denote $H=l^2$ and define the scalar product and norm (called the energy norm) in the usual manner
\[
(a, b) := \sum_{j=0}^\infty a_j b_j, \hspace{2cm} |a| := \left( \sum_{j=0}^\infty a_j^2 \right)^\frac{1}{2}.
\]
A solution $a(t)$ is called positive if $a_j(t) \ge 0$ for all $j \in \N$ and all time $t$. 

\begin{thm} \label{thm:positive}
Let $a(t)$ be a solution to \eqref{eq:aj} such that $a_j(0) \ge 0$ for all $j \in \N$. Then $a_j(t) \ge 0 $ for all $j \in \N$ and all $t>0$.
\end{thm}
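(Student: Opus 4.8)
The plan is to exploit a structural feature of the system \eqref{eq:aj}: for each fixed $j$, the only term that can drive $a_j$ downward is $-\lambda_j^{5/2} a_j a_{j+1}$, which is \emph{linear} in $a_j$, while the production term $\lambda_{j-1}^{5/2} a_{j-1}^2$ is a perfect square and hence nonnegative regardless of the sign of $a_{j-1}$. Thus I would freeze the neighboring components and read the $j$-th equation as a scalar linear ODE for $a_j$,
\[
\frac{d}{dt} a_j(t) + \lambda_j^{5/2} a_{j+1}(t)\, a_j(t) = \lambda_{j-1}^{5/2} a_{j-1}^2(t),
\]
with continuous coefficient $\lambda_j^{5/2} a_{j+1}(t)$ and nonnegative forcing on the right-hand side.

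First I would introduce the integrating factor
\[
\mu_j(t) := \exp\left( \int_0^t \lambda_j^{5/2} a_{j+1}(s)\, ds \right),
\]
which is well defined and strictly positive on the interval of existence since $a_{j+1}$ is continuous there. Multiplying the equation by $\mu_j$ and integrating yields the variation-of-constants formula
\[
\mu_j(t)\, a_j(t) = a_j(0) + \int_0^t \mu_j(s)\, \lambda_{j-1}^{5/2} a_{j-1}^2(s)\, ds.
\]
Every term on the right is nonnegative: $a_j(0) \ge 0$ by hypothesis, $\mu_j(s) > 0$, and $a_{j-1}^2(s) \ge 0$. Since $\mu_j(t) > 0$, this forces $a_j(t) \ge 0$ for all $t$ in the interval of existence, and the argument applies to every $j \in \N$ (the case $j = 1$ being immediate, as $a_0 \equiv 0$ makes the forcing vanish).

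The main point to be careful about — and essentially the only obstacle — is the justification of the integrating factor, namely that $\lambda_j^{5/2} a_{j+1}$ is locally integrable. This is guaranteed because each component of a solution is continuous (indeed differentiable) on its interval of existence, so the exponent in $\mu_j$ is finite on compact time intervals. I would also emphasize that the argument is not circular and requires no induction on $j$: positivity of $a_j$ uses only that the multiplier $\mu_j$ is an exponential (hence positive whatever the sign of $a_{j+1}$) and that the forcing is a square (hence nonnegative whatever the sign of $a_{j-1}$). Consequently the nonnegativity of each coordinate is established independently of its neighbors, and the conclusion holds on the full interval on which the solution $a(t)$ is defined.
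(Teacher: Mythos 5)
Your argument is correct and is essentially the standard one: the paper itself offers no proof of this theorem, deferring to \cite{MR2600714} and \cite{MR3057168}, where positivity is established by exactly this integrating-factor / variation-of-constants computation, using that the damping term is linear in $a_j$ and the forcing is a square. Your observations that no induction on $j$ is needed and that continuity of $a_{j+1}$ justifies the integrating factor are the right points to flag; nothing is missing.
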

See \cite{MR2600714} and \cite{MR3057168}. Moreover in \cite{MR3057168}, Barbato and Morandin proved uniqueness for positive initial data.

In this section we illustrate why $\theta = \frac{5}{6}$ corresponds to Onsager's scaling by proving the following theorem (cf. \cite{MR2422377}, \cite{MR2665030}):
\begin{thm} \label{thm:energy}
Let $a(t)$ be a positive solution to \eqref{eq:aj} such that 
\[
\lim_{j \rightarrow \infty} \int_0^T \left( \la_j^\frac{5}{6}a_j(t) \right)^3 ~\dt =0
\]
  then $a(t)$ conserves energy on $[0, T]$.
\end{thm}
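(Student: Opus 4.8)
The plan is to derive an exact balance for the truncated energies and then show that the hypothesis forces the energy flux through high shells to vanish. First I would fix $N$ and differentiate the partial energy $\sum_{j=1}^N a_j^2$. Multiplying the $j$-th equation in \eqref{eq:aj} by $2a_j$ gives $\ddt a_j^2 = 2\la_{j-1}^{5/2} a_{j-1}^2 a_j - 2\la_j^{5/2} a_j^2 a_{j+1}$, and the gain term of shell $j+1$ cancels exactly the loss term of shell $j$. Hence the sum telescopes and, using $a_0 \equiv 0$, I obtain
\[
\ddt \sum_{j=1}^N a_j^2(t) = -2\la_N^{5/2} a_N^2(t)\, a_{N+1}(t) =: -2\Pi_N(t).
\]
Since the solution is positive, $\Pi_N \ge 0$, so every partial energy is non-increasing and bounded by $|a(0)|^2$; in particular $a(t) \in H$ for all $t$ and $\sum_{j=1}^N a_j^2(t) \to |a(t)|^2$ monotonically.

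Integrating the balance over $[0,t]$ yields $\sum_{j=1}^N a_j^2(t) = \sum_{j=1}^N a_j^2(0) - 2\int_0^t \Pi_N(s)\,\ds$. Letting $N \to \infty$, the right-hand partial sum tends to $|a(0)|^2$ while the left tends to $|a(t)|^2$, so energy is conserved on $[0,T]$ precisely when $\int_0^t \Pi_N(s)\,\ds \to 0$ as $N \to \infty$ for each $t \in [0,T]$. It therefore remains only to bound the integrated flux by the quantities appearing in the hypothesis.

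To do this I would relate $\Pi_N$ to the $L^3$-type terms. Observing that $\la_j^{5/2} a_j^3 = (\la_j^{5/6} a_j)^3$, the assumption is exactly that $\int_0^T \la_j^{5/2} a_j^3\,\dt \to 0$. Applying the weighted AM--GM (Young) inequality $a_N^2 a_{N+1} \le \tfrac{2}{3} a_N^3 + \tfrac{1}{3} a_{N+1}^3$ together with the scaling $\la_N^{5/2} = 2^{-5/2}\la_{N+1}^{5/2}$, I get the pointwise bound
\[
\Pi_N = \la_N^{5/2} a_N^2 a_{N+1} \le \tfrac{2}{3}\la_N^{5/2} a_N^3 + \tfrac{1}{3}\la_{N+1}^{5/2} a_{N+1}^3.
\]
Integrating over $[0,t] \subseteq [0,T]$ and using positivity to extend the domain of integration to $[0,T]$, both terms on the right tend to $0$ as $N \to \infty$ by hypothesis, so $\int_0^t \Pi_N\,\ds \to 0$ and energy is conserved. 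This is also where the exponent $\tfrac{5}{6}$ enters: it is precisely the scaling that makes $\la_N^{5/2} a_N^2 a_{N+1}$ controllable by the cubic $\la^{5/2}a^3$ quantities after one dyadic shift.

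The computation itself is short, and I do not expect a serious obstacle. The only point requiring care is the limiting argument in the second step — justifying both that $\sum_{j=1}^N a_j^2(t) \to |a(t)|^2$ and that the integrated flux genuinely vanishes rather than merely remaining bounded. Both follow cleanly from positivity of the solution, which makes the fluxes nonnegative and the partial energies monotone and uniformly bounded by $|a(0)|^2$, so the main work is simply tracking the $N$-dependence through the telescoping identity and the Young estimate.
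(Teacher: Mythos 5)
Your proof is correct and follows essentially the same route as the paper's: multiply by $a_j$, telescope the partial energy balance to isolate the flux $\la_N^{5/2}a_N^2a_{N+1}$, and bound it via Young's inequality by $\la_N^{5/2}a_N^3 + \la_{N+1}^{5/2}a_{N+1}^3$, which vanishes under the hypothesis. Your extra remarks on the monotone convergence of the partial energies (from positivity of the flux) are a welcome bit of added care but do not change the argument.
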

\begin{proof}
We examine the total energy flux through the first $J^{th}$ shells. To do this, we multiply equations \eqref{eq:aj} by $a_j(t)$, take the finite sum from $j=0$ to $j=J$, and integrate over time  for $0\le t \le T$ to obtain
\[
\int_0^t \sum_{j=0}^J a_j a_j' ~\dtau = \int_0^t \sum_{j=0}^J \left( \la_{j-1}^\frac{5}{2} a_{j-1}^2 a_j- \la_j^\frac{5}{2} a_j^2 a_{j+1} \right) ~\dtau.
\]
The righthand sum telescopes and we rewrite the left side
\[
\int_0^t \sum_{j=0}^J \frac{1}{2} \ddtau ( a_j^2)~\dtau = -\int_0^t \la_J^\frac{5}{2}a_J^2 a_{J+1} \dtau,
\]
which yields
\begin{align} \label{eq:energy}
\frac{1}{2}  \sum_{j=0}^J a_j^2(t) - \frac{1}{2} \sum_{j=0}^J a_j^2(0) = -\int_0^t \la_J^\frac{5}{2}a_J^2(\tau) a_{J+1}(\tau)~\dtau.
\end{align}
Now consider the integral on the righthand side. By Young's inequality, we have
\begin{align*}
0 \le \int_0^t \la_J^\frac{5}{2}a_J^2 a_{J+1} ~\dtau 
&\le \int_0^t \la_J^\frac{5}{2} \left(  \frac{(a_J^2)^\frac{3}{2}}{3/2} + \frac{(a_{J+1})^3}{3}      \right) ~\dtau \\
&\le \int_0^t \la_J^\frac{5}{2}a_J^3 ~\dtau + \int_0^t \la_{J+1}^\frac{5}{2} a_{J+1}^3 ~\dtau.
\end{align*}
Hence by our assumption, 
\[
\lim_{J \rightarrow \infty} \int_0^t \la_J^\frac{5}{2}a_J^2 a_{J+1} ~\dtau = 0.
\]
We take the limit of \eqref{eq:energy} as $J$ goes to infinity to conclude that energy is conserved since $|a(t)|^2 = |a(0)|^2$. 

\end{proof}

\section{The Modified Galerkin Approximation with Flux} 
Define the strong and weak distances, denoted respectively by $\dS$ and $\dW$, as: 
\[
\dS(a, b):= |a-b|, \hspace{2cm} \dW(a,b) := \sum_{j = 0} ^\infty 
\frac{1}{\la^{(j)^2}} \frac{\left| a_j -  b_j \right|}{ 1+ \left| a_j - b_j\right|}.
\]
Also define the modified Galerkin approximation with flux, denoted by
$$
a^n(t) = (a_0^n(t), a_1^n(t), ..., a_n^n(t), 0, ...),
$$ 
to be a solution to the following finite system of ordinary differential equations: 
\begin{align} \label{eq:galerkin}
\begin{split}
&\ddt a_j^n  - \la^\frac{5}{2}_{j-1} (a_{j-1}^n)^2 +\la^\frac{5}{2}_j  a_j^n a_{j+1}^n = 0, \hspace{1cm} j=1, 2, ..., n-1, \\
&\ddt a_n^n  -\la^\frac{5}{2}_{n-1}(a_{n-1}^n)^2 + \la^{\frac{5}{2}-2\theta} \la^{\frac{5}{2}-\theta}_n   a_n^n = 0,
\end{split}
\end{align}
with $a_j^n(0)  = a_j^0$ for $j = 1, 2, ..., n$, where $\theta$ is any positive number. 

By a similar argument to Theorem 3.2 from \cite{MR2600714}, we obtain the following theorem:

\begin{thm} \label{thm:galerkin}
The sequence of the modified Galerkin approximation with flux converges to a solution of the dyadic model \eqref{eq:aj}. 
\end{thm}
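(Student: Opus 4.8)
The plan is to adapt the standard compactness argument for the dyadic model (Theorem 3.2 of \cite{MR2600714}) to the modified system \eqref{eq:galerkin}, the only new feature being the damping term in the top shell. The first step is a uniform energy estimate. Multiplying \eqref{eq:galerkin} by $a_j^n$ and summing over $j = 0, \dots, n$, the internal flux terms telescope exactly as in the proof of Theorem \ref{thm:energy}; the incoming flux $\la_{n-1}^{\frac 52}(a_{n-1}^n)^2 a_n^n$ into the top shell is exactly cancelled, leaving
\[
\frac 12 \ddt |a^n(t)|^2 = -\la^{\frac 52 - 2\theta}\la_n^{\frac 52 - \theta}\left( a_n^n(t) \right)^2 \le 0 .
\]
Hence $|a^n(t)| \le |a^n(0)| \le |a(0)|$ for all $t \ge 0$ and all $n$, giving a bound in $L^\infty([0,\infty); H)$ that is uniform in $n$.

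Next I would establish compactness in $C([0,T], (H, \dW))$ via the Arzel\`a--Ascoli theorem. For each fixed $j$ and all $n > j$, the $j$-th equation of \eqref{eq:galerkin} coincides with \eqref{eq:aj}, so the energy bound yields
\[
\left| \ddt a_j^n(t) \right| \le \la_{j-1}^{\frac 52}|a(0)|^2 + \la_j^{\frac 52}|a(0)|^2 ,
\]
a constant independent of $n$. Thus each coordinate $t \mapsto a_j^n(t)$ is Lipschitz with constant uniform in $n$, which together with the super-exponentially decaying weights $\la^{-(j)^2}$ in $\dW$ gives equicontinuity of the family $\{a^n\}$ in the weak metric (the polynomial-in-$\la_j$ Lipschitz constants are absorbed by the weights). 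Combined with the uniform energy bound from Step 1, so that the sequence takes values in the fixed ball $\{\, |a| \le |a(0)| \,\}$, which is precompact in $(H, \dW)$, Arzel\`a--Ascoli produces a subsequence, still denoted $a^n$, converging in $C([0,T], (H, \dW))$ to some limit $a(t)$; in particular each coordinate converges uniformly on $[0,T]$.

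Finally I would pass to the limit in the equations. Fix $j$ and take $n > j$; integrating the $j$-th equation gives
\[
a_j^n(t) = a_j^0 + \int_0^t \left( \la_{j-1}^{\frac 52}(a_{j-1}^n)^2 - \la_j^{\frac 52} a_j^n a_{j+1}^n \right) \dtau .
\]
The coordinates $a_{j-1}^n, a_j^n, a_{j+1}^n$ converge uniformly and are bounded by $|a(0)|$, so the quadratic integrand converges pointwise and is dominated; by dominated convergence the integral passes to the limit, and $a(t)$ satisfies the integral form of \eqref{eq:aj} for every $j$. Crucially, the modified top-shell term $\la^{\frac 52 - 2\theta}\la_n^{\frac 52 - \theta} a_n^n$ appears only in the $n$-th equation, so as $n \to \infty$ it never touches any fixed coordinate and cannot pollute the limit. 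I expect the main obstacle to be precisely this limit passage in the quadratic nonlinearity: one must check that convergence in the weak metric $\dW$ --- which does not control the energy norm --- is nonetheless strong enough to pass to the limit termwise, and that the uniform-in-$n$ energy bound supplies the domination needed for the product terms. Once the termwise limit is justified, the limit $a(t)$ is a solution of \eqref{eq:aj}, completing the proof.
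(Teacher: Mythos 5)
Your proposal is correct and follows essentially the same route as the paper: a uniform bound on the coordinates, coordinate-wise Lipschitz estimates absorbed by the weights of $\dW$ to get equicontinuity, Arzel\`a--Ascoli in $C([0,T];H_W)$, and passage to the limit in the integral form of each fixed equation. The only difference is that you derive the uniform bound explicitly from the telescoping energy identity and the sign of the top-shell damping term, whereas the paper simply asserts the existence of the constant $M$; this is a welcome elaboration, not a different argument.
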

\begin{proof}
Denote $a(0)=a^0$, such that $a^0 \in H$ and let $T>0$ be arbitrary. We will show that the modified Galerkin approximation with flux converges to a solution of \eqref{eq:galerkin} on $[0, T]$. We know there exists a unique solution $a^n(t)$ to \eqref{eq:galerkin} from the theory of ordinary differential equations.  We will show the system of Galerkin approximations $\{a^n\}$ is weakly equicontinuous.  There exists $M>1$ such that $a_j^n(t) \le M$ for any $t \in [0, T]$ and for all $j$ and $n$.  Then 
\begin{align*}
\left| a_j^n(t) - a_j^n(s) \right| & \le \left| \int_s^t \left(  \la^\frac{5}{2}_{j-1} (a_{j-1}^n)^2(\tau) - \la^\frac{5}{2}_j a_j^n(\tau) a_{j+1}^n (\tau)\right) \dtau \right| \\
& \le \left( \la^\frac{5}{2}_{j-1} M^2 +   \la^\frac{5}{2}_j M^2 \right) \left| t-s \right|.
\end{align*} 
Thus 
$$
\dW \left( a^n(t), a^n(s) \right) = \sum_{j = 0} ^\infty 
\frac{1}{\la^{(j)^2}} \frac{\left| a_j^n(t) - a_j^n(s) \right|}{ 1+ \left| a_j^n(t) - a_j^n(s) \right|}
\le c  \left| t-s \right|,
$$
for some constant $c$ independent of $n$.  Then $\{a^n\}$ is an equicontinuous sequence in $C([0, T]; H_W)$ with bounded initial data.  The Arzel\`{a}-Ascoli theorem then implies that $\{a^n\}$ is relatively compact in $C([0, T]; H_W)$.  Passage to a subsequence yields a weakly continuous $H$-valued function $a(t)$ such that 
$$ a^{n_m} \rightarrow a \hspace{5mm} \mathrm{as} \hspace{5mm} n_m \rightarrow \infty  \hspace{5mm}  \mathrm{in}  \hspace{5mm} C([0, T]; H_W).
$$
In particular, $a_j^{n_m} \rightarrow a_j(t)$ as $n_m \rightarrow \infty$ for all $j$ and for all $t \in [0,T]$. Thus $a(0) = a^0$.

Furthermore 
$$
a_j^{n_m} (t) = a_j^{n_m}(0) + \int_0^t \left( \la^\frac{5}{2}_{j-1}(a_{j-1}^{n_m})^2(\tau) -  \la^\frac{5}{2}_j  a_j^{n_m}(\tau) a_{j+1}^{n_m}(\tau)    \right) \dtau,
$$
for $j \le n_m - 1$. Now let $n_m \rightarrow \infty$.  Then 
$$
a_j(t) = a_j(0) + \int_0^t \left(  \la^\frac{5}{2}_{j-1}a_{j-1}^2(\tau) - \la^\frac{5}{2}_j  a_j(\tau) a_{j+1}(\tau)    \right) \dtau.
$$ 
Since $a_j(t)$ is continuous, then $a_j \in C^1 ([0,T])$ and it satisfies our inviscid dyadic system.   
\end{proof}

\begin{lem} \label{lem:scaling}
If $a(t)$ solves \eqref{eq:aj} with initial condition $a(t_0)= a^0$, then $\tilde{a}(t) = \eta a(\eta t)$ is a solution to \eqref{eq:aj} with initial condition $\tilde{a}(t_0) = \eta a(\eta t_0) = \tilde{a}^0$.
\end{lem}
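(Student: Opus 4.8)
The plan is to verify the claim by direct substitution, exploiting the fact that the nonlinearity in \eqref{eq:aj} is homogeneous of degree two. Since the asserted solution is $\tilde{a}(t) = \eta a(\eta t)$, or componentwise $\tilde{a}_j(t) = \eta a_j(\eta t)$, I would simply insert this ansatz into both sides of the system and check that they agree. The boundary condition $\tilde{a}_0(t) = \eta a_0(\eta t) = 0$ is immediate from $a_0 \equiv 0$, and the stated initial condition $\tilde{a}(t_0) = \eta a(\eta t_0)$ holds by the definition of $\tilde{a}$, so the only substantive task is the verification of the differential equations.

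First I would compute the left-hand side. By the chain rule, $\ddt \tilde{a}_j(t) = \eta \cdot \eta\, a_j'(\eta t) = \eta^2 a_j'(\eta t)$, where the two factors of $\eta$ arise respectively from the explicit prefactor in $\tilde{a}_j$ and from differentiating the rescaled time argument. Next I would evaluate the right-hand side of \eqref{eq:aj} at $\tilde{a}$, using that each nonlinear term is quadratic:
\[
\la_{j-1}^{5/2}\,\tilde{a}_{j-1}^2(t) - \la_j^{5/2}\,\tilde{a}_j(t)\,\tilde{a}_{j+1}(t) = \eta^2\Bigl( \la_{j-1}^{5/2}\,a_{j-1}^2(\eta t) - \la_j^{5/2}\,a_j(\eta t)\,a_{j+1}(\eta t) \Bigr).
\]
Because $a$ solves \eqref{eq:aj}, the bracketed expression is precisely the right-hand side of the $j$-th equation evaluated at time $\eta t$, hence it equals $a_j'(\eta t)$. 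The right-hand side above therefore reduces to $\eta^2 a_j'(\eta t)$, which matches the left-hand side computed above, so $\tilde{a}$ satisfies \eqref{eq:aj}.

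I do not anticipate any genuine obstacle here; the lemma is a scaling symmetry and the proof is a short consistency check. The single point deserving care is the matching of exponents, namely confirming that the factor $\eta^2$ produced by the two $\eta$'s from the time derivative coincides with the degree-two homogeneity of the nonlinear term. Note that the particular power $\tfrac{5}{2}$ on $\la$ is irrelevant to this computation, since those coefficients multiply no power of $\eta$; the invariance reflects the quadratic structure of \eqref{eq:aj} rather than the precise value of the coefficients, and it is exactly this invariance that makes the dynamical rescaling argument available in the later sections.
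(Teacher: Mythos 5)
Your proof is correct and follows essentially the same route as the paper's: a direct chain-rule computation showing $\ddt \tilde{a}_j(t) = \eta^2 a_j'(\eta t)$, matched against the $\eta^2$ produced by the quadratic nonlinearity. No issues.
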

\begin{proof} For $j=0$, the result is trivial.  For $j = 1, 2, 3, ...$, we have
\[
\ddt a_j(t) =  \la_{j-1}^\frac{5}{2} a_{j-1}^2(t) - \la_j^\frac{5}{2}a_j(t)a_{j+1}(t) .
\]
So \begin{align*}
 \ddt \tilde{a}_j(t) &= \eta^2 \ddt a_j(\eta t) \\
&= \eta^2 \left( \la_{j-1}^\frac{5}{2} a_{j-1}^2(\eta t) - \la_j^\frac{5}{2} a_j(\eta t) a_{j+1}(\eta t) \right) \\
& = \la_{j-1}^\frac{5}{2} \left( \eta a_{j-1}(\eta t) \right)^2 -  \la_j^\frac{5}{2} \left( \eta a_j(\eta t)  \right) \left( \eta a_{j+1}(\eta t) \right) \\
& =  \la_{j-1}^\frac{5}{2} \tilde{a}_{j-1}^2(t) - \la_j^\frac{5}{2} \tilde{a}_j(t) \tilde{a}_{j+1}(t).
\end{align*}
Thus $\tilde{a}(t)$ satisfies \eqref{eq:aj} with initial condition $\tilde{a}(t_0) = \eta a(\eta t_0) = \tilde{a}^0$. 
\end{proof}

\section{Regularity}

In this section, we study the regularity of positive solutions to the inviscid dyadic model. Apply the change of variables $c_j(t) = \la^{2\theta - \frac{5}{2}}\la^\theta_j a_j(t) $ to rewrite the equations as
\begin{align} \label{eq:cj}
\begin{split}
&\ddt c_j (t) = \la_j^{\frac{5}{2}-\theta} \left( c_{j-1}(t)^2 -\g c_j(t) c_{j+1}(t) \right), \hspace{1cm} j = 1, 2, ...,\\
&c_0(t) = 0,
\end{split}
\end{align}
where $\g = \la^{\frac{5}{2} - 3\theta}$.  We choose 
\begin{align} \label{eq:theta}
\theta = \frac{3}{5}.
\end{align}

\begin{thm} \label{thm:cj}
Let $a(t)$ be a positive solution to \eqref{eq:aj}. There exists $\delta > 0$ such that if $c_j(0) \le \delta <1$ for any $j \in \N$, then $c_j(t) < 1$ for any $j \in \N$ and for all $t>0$.
\end{thm}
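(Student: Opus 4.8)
The plan is to prove the barrier for the modified Galerkin approximation with flux \eqref{eq:galerkin} uniformly in $n$ and then pass to the limit. Rewriting \eqref{eq:galerkin} in the $c$ variables $c_j^n=\la^{2\theta-5/2}\la_j^{\theta}a_j^n$, the interior modes obey \eqref{eq:cj}, while the top mode picks up a \emph{linear damping}, $\ddt c_n^n=\la_n^{5/2-\theta}\bigl((c_{n-1}^n)^2-\la^{5/2-2\theta}c_n^n\bigr)$. Working with the truncation is what makes a first-crossing argument legitimate: only finitely many modes are active, so $t\mapsto\max_j c_j^n(t)$ is continuous and attained, and by Theorem~\ref{thm:galerkin} the pointwise-in-$j$ limit $a_j^{n_m}\to a_j$ transfers a uniform-in-$n$ bound to the limiting solution. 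I would fix how small $\delta$ must be only at the very end, once the quantitative estimates dictate it.

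Suppose the barrier fails and set $t_*=\inf\{t>0:\max_j c_j^n(t)=1\}$. Continuity together with $c_j^n(0)\le\delta<1$ gives $t_*>0$, and there is an index $j_0$ with $c_{j_0}^n(t_*)=1$ while $c_j^n(t)\le1$ for all $j$ and $t\le t_*$, so that $\ddt c_{j_0}^n(t_*)\ge0$ (below I suppress the superscript $n$ on interior modes). If $j_0=n$, the damped equation gives $\ddt c_n^n(t_*)=\la_n^{5/2-\theta}\bigl((c_{n-1}^n)^2-\la^{5/2-2\theta}\bigr)\le\la_n^{5/2-\theta}\bigl(1-\la^{5/2-2\theta}\bigr)<0$, since $\la^{5/2-2\theta}=\la^{13/10}>1$ at $\theta=\tfrac35$; this contradiction rules out a crossing at the top mode. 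Hence the crossing mode is interior, and this is where the real work lies.

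For interior $j_0$, positivity (Theorem~\ref{thm:positive}) and $\ddt c_{j_0}(t_*)\ge0$ give only $c_{j_0-1}(t_*)^2\ge\g\,c_{j_0+1}(t_*)$, hence $c_{j_0+1}(t_*)\le\g^{-1}<1$; this is not yet a contradiction, because the incoming flux $c_{j_0-1}^2$ is exactly the obstruction. Indeed the box $[0,1]^{\N}$ is genuinely not invariant — at a configuration with $c_{j_0-1}$ near $1$ and $c_{j_0+1}$ near $0$ the field points outward — so the contradiction must come from the history of the solution, not the instantaneous sign. The mechanism I would exploit is the forward cascade reinforced by a separation of time scales: the prefactor of the downstream equation, $\la_{j_0+1}^{5/2-\theta}=\la^{5/2-\theta}\la_{j_0}^{5/2-\theta}$, exceeds that of shell $j_0$ by the fixed factor $\la^{5/2-\theta}=\la^{19/10}$, so shell $j_0+1$ relaxes to its quasi-equilibrium $c_{j_0+1}\approx c_{j_0}^2/(\g\,c_{j_0+2})$ faster than $c_{j_0}$ can climb from $\delta$ to $1$. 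Since $c_{j_0}(t_*)=1$, this quasi-equilibrium exceeds $\g^{-1}$ unless $c_{j_0+2}$ is itself near the barrier, in which case one repeats the argument one shell higher. The chain of near-barrier shells must terminate at the damped top mode $j=n$, where the computation above already produced $\ddt c_n^n<0$; thus the interior scenario is pushed up the cascade until it collides with the absorbing top mode, so the Galerkin damping is part of the mechanism rather than a mere convenience. Smallness of $\delta$ guarantees the solution starts far enough inside the region that each such comparison across $(j_0-1,j_0,j_0+1)$ has room to run.

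I expect this step to be the entire difficulty, and to be exactly where the exponent is pinned. At each link of the chain the inequality that must close pits the growth rate $\la_j^{5/2-\theta}$, governing how fast one shell refills the next, against the coupling $\g=\la^{5/2-3\theta}$, weighting the outgoing flux, and both depend on $\theta$. At $\theta=\tfrac35$ one has $\g=\la^{7/10}>1$ and a strict positive margin; as $\theta\uparrow\tfrac56$ one gets $\g\downarrow1$, the cascade depletion of $c_{j_0}$ becomes too weak to overcome the incoming flux, and the chain no longer closes — precisely why the method halts short of Onsager scaling $\theta=\tfrac56$ and only reaches the choice \eqref{eq:theta}. The remaining points are routine: that $t_*>0$ and that the bound passes to the weak limit follow from continuity, positivity, and the pointwise-in-$j$ convergence of Theorem~\ref{thm:galerkin} (the limit gives $\sup_j c_j\le1$, which already yields Theorem~\ref{thm:reg}, and the strict inequality of Theorem~\ref{thm:cj} is recovered by rerunning the crossing analysis on the limiting $C^1$ solution); and the downstream quasi-equilibrium estimate must be made uniform in $n$ and in the shell index, so that a single $\delta$ works for all truncations and all shells simultaneously.
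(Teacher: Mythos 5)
Your architecture matches the paper's: prove the bound for the modified Galerkin truncation via a first-crossing argument, note that the added damping rules out a crossing at the top mode, and pass to the limit via Theorem~\ref{thm:galerkin}. You also correctly diagnose the crux: the box is not invariant, the instantaneous sign condition at the crossing only yields $c_{j_0+1}(t_*)\le \g^{-1}$, and the contradiction must come from the history of the solution --- specifically from showing that shell $j_0+1$ has already been pumped up by the time shell $j_0$ reaches $1$. That is indeed the paper's mechanism.

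However, the step you yourself flag as ``the entire difficulty'' is not carried out, and the mechanism you sketch for it would not close as stated. The ``separation of time scales'' between consecutive shells is a fixed factor $\la^{\frac{5}{2}-\theta}=\la^{\frac{19}{10}}$, not a large parameter, so the adiabatic claim that $c_{j_0+1}$ relaxes to $c_{j_0}^2/(\g c_{j_0+2})$ faster than $c_{j_0}$ can climb is not automatic; worse, the relaxation rate toward that quasi-equilibrium is proportional to $c_{j_0+2}$, for which you have no lower bound. Your ``chain of near-barrier shells terminating at the top mode'' also has a structural problem: at the first crossing time no shell other than $j_0$ has reached $1$, ``near the barrier'' is never quantified, and so there is no induction to run --- nor is one needed, since the paper's contradiction is entirely local in $j$. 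What the paper actually does is quantitative: after rescaling time by $\la_n^{\theta-\frac{5}{2}}$ it observes that $b_n$ climbs at rate at most $1$, so before reaching $1$ it must ramp up linearly, $b_n(t)\ge k-t_0+t$ with $k=0.96$ on an interval of length $k-\delta$; feeding this ramp into a Gronwall estimate for $b_{n+1}$ gives the explicit lower bound $b_{n+1}(t_0)\ge B=0.447$; then explicit comparison functions $\hat b_{n-1}(t)$ (decaying to $1/(k\g)$) and $\tilde b_{n+1}(t)$ (rising to $k^2/\g$) are propagated forward and substituted into the $b_n$ equation, yielding $b_n\le\beta(t)$ with $\beta'$ eventually decaying exponentially and $\beta<1$ verified numerically on a finite interval. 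Choosing the constants $k$, $B$, $\delta$ so that this closes is precisely where $\theta=\frac{3}{5}$ in \eqref{eq:theta} is pinned down. Without estimates of this kind your proposal is a correct plan plus a heuristic, not a proof.
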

\begin{proof}
By the uniqueness proved in \cite{MR3057168} and by Theorem \ref{thm:galerkin}, we have 
\[
a_j(t) = \lim_{m \rightarrow \infty}  a^m_j(t),
\]
where $a_j^m(t)$ is the $m^{th}$ order Galerkin approximation of $a_j(t)$. So it suffices to prove the theorem for the Galerkin approximation $a^m(t)$. We will suppress the notation by omitting the index $m$. 

Fix $m$ and consider the $m^{th}$ Galerkin approximation 
$$ 
c(t) = (c_0(t), c_1(t), ..., c_m(t), 0, ...).
$$ 
Suppose for contradiction there exists $j_0 \in \N$ such that there is a time $T_0 >0$ for which $c_{j_0}(T_0) = 1$ but $c_{j_0}(t) < 1$ for $0<t<T_0$.  Define the set of indices 
$$
I : = \{ j \in \N : j \le m\}.
$$
If $c_j(t) < 1$ for all $j \in I$ for any time $0<t<T_0$, then let $n = j_0$.  Otherwise, if there is a $j \in I$ such that $c_j(t)=1$ for some time $0<t \le T_0$, then define $t_j >0$ to be the time such that $c_j(t_j) = 1$ but $c_j(t) < 1$ for $0 < t< t_j \le T_0$.  If $c_j(t) < 1$ on $(0, T_0]$, then let $t_j = \infty$.  Next define
$$
t^*: = \min_{j \in I} t_j.
$$
Now we can define $n:= \min\{j \in I : c_j(t^*) = 1\}$.  

Note that $n \neq 1$ since  
\begin{align*}
&\ddt c_1(t) =  - \la^{\frac{5}{2}-\theta} \g c_1(t)c_2(t) < 0, \\
& c_1(0) \le \delta <1,
\end{align*}
as $c_1(t), c_2(t) \geq 0$ for all $t>0$. Then $c_1(t)$ is a non-increasing function with initial value strictly below 1. Thus $c_1(t$) cannot cross 1 and hence $t_1 = \infty$.

Now we have a fixed $n \in I$ such that $c_n(t) < 1$ for $0<t<t^*$, $c_n(t^*) = 1$, and $c_j(t) < 1 $ for all other $j \in I \setminus \{n\}$ and $0 < t < t^*$.  We rescale time as $b_j(t) = c_j(\la_n^{\theta - \frac{5}{2}} t)$. 

Note that $b_j(t)$ satisfies 
\begin{equation} \label{eq:bj}
\ddt b_j(t) =   \la_{n-j}^{\theta - \frac{5}{2}}(b_{j-1}(t)^2 - \g b_{j}(t) b_{j+1}(t)),
\end{equation}
where $b_{j}(0) < \delta$ and $b_{j}(t) < 1$ for all $j$ and $0<t<T^*$, where $T^* = \la_n^{\frac{5}{2} - \theta}t^*$. \\

\noindent \textit{Step 1:} A very rough estimate for $b_n(t)$ for $t<t_0$.\\
Fix $k = 0.96$. Note  $\delta < k <1 $. There exists time $t_0 > 0$ such that $k < b_n(t) < 1$ for $t_0 < t < T^*$ and $b_n(t_0) = k$. Recall our assumption on the initial data: $b_n(0) \leq \delta $. So 
\[ 
\ddt b_n(t)  = b_{n-1}(t)^2 - \gamma b_n(t) b_{n+1}(t) < 1,
\]
since $b_{n-1}(t) < 1$ and $b_n(t), b_{n+1}(t) > 0$ for $0<t< T^*$. Thus we have a lower bound on $t_0$: $t_0 \geq k - \delta$. Apply Gronwall's inequality backward in time for $b_n(t)$ to arrive at the following lower bound:
\[
b_n(t)  \geq k - t_0 + t ~~~\mathrm{for} ~~~ t \in [t_0 - k + \delta, t_0] \subseteq [\max\{0, t_0-k\}, t_0].
\] 
See Figures 1 (A) and (B). \\
  
  

\begin{figure}
  \centering
    \begin{subfigure}[b]{0.48\textwidth}
    \includegraphics[width=\textwidth]{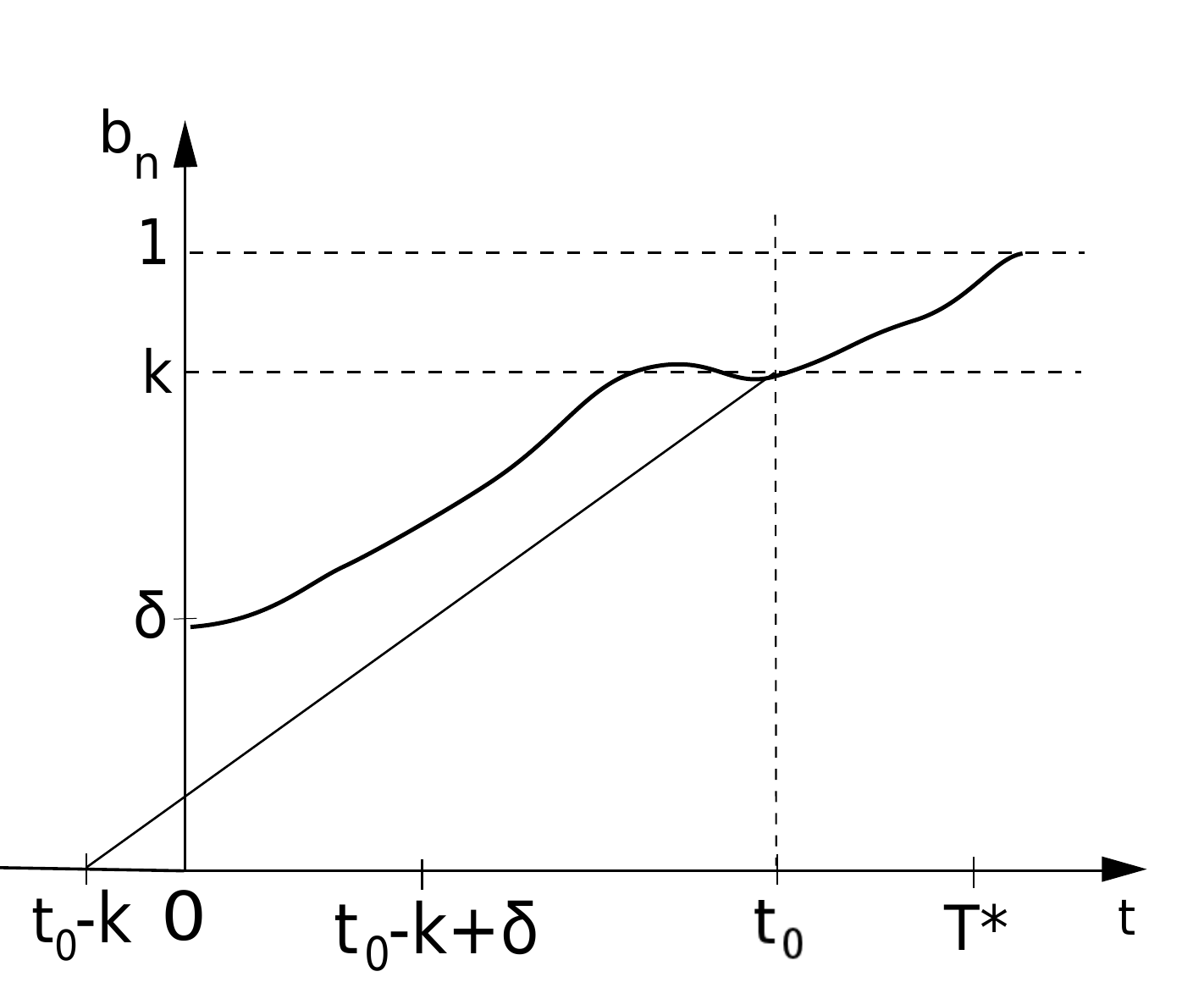}
    \caption{}
    \end{subfigure}%
        ~ 
    \begin{subfigure}[b]{0.48\textwidth}
    \includegraphics[width=\textwidth]{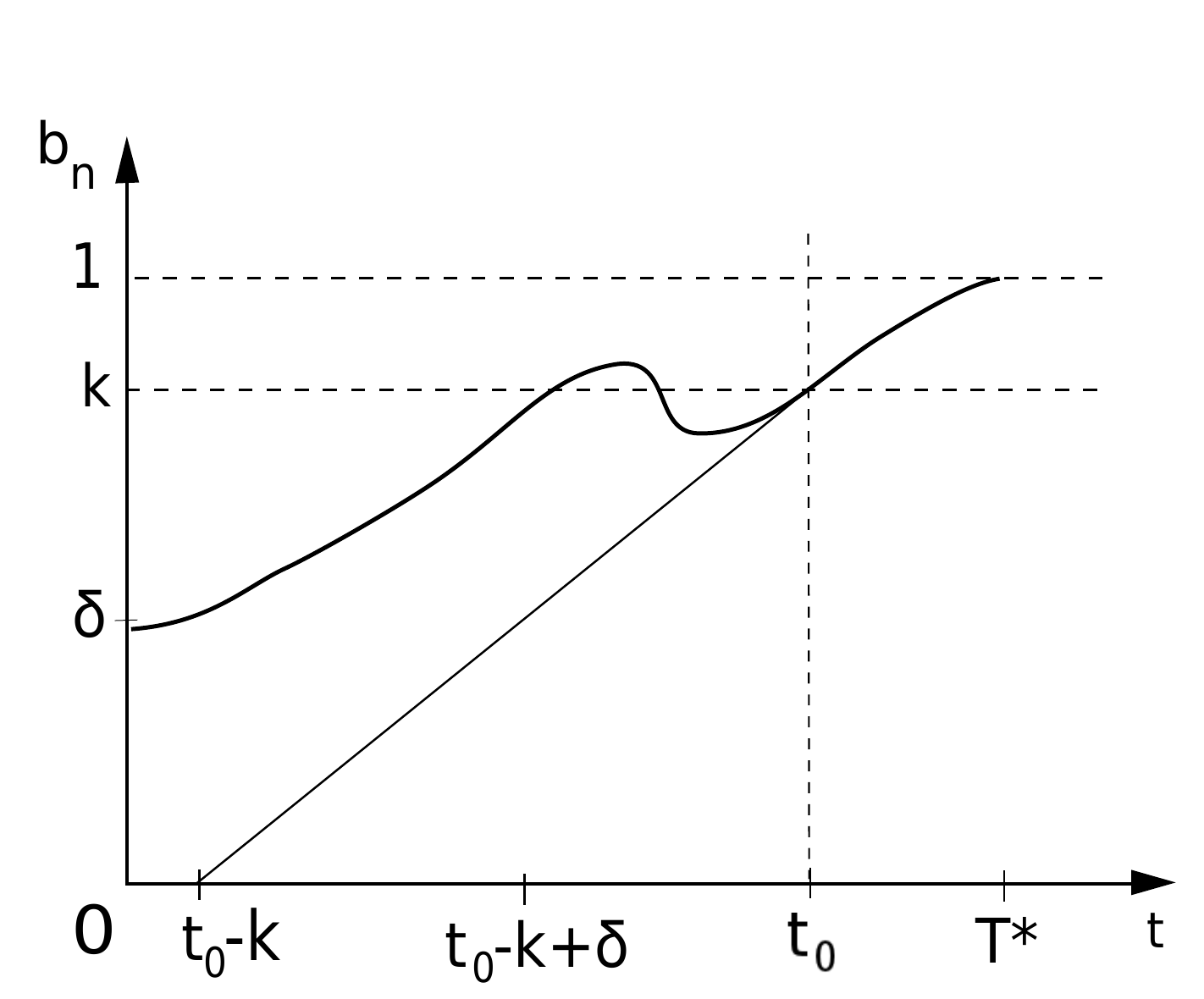}
    \caption{}
    \end{subfigure}
\caption{$k - t_0 +t$ bounds $b_n(t)$ from below for $t \in [t_0 - k + \delta, t_0] \subseteq [\max\{0, t_0-k\}, t_0]$.}
\end{figure}


\noindent \textit{Step 2:} Estimate $b_{n+1}(t_0)$.\\
For $t \in [t_0 - k + \delta, t_0]$, we have 
\begin{align*}
\ddt b_{n+1}(t) &=   \la^{\frac{5}{2} - \theta} \left(b_{n}(t)^2 - \g b_{n+1}(t) b_{n+2}(t) \right)\\
& \geq \la^{\frac{5}{2} - \theta} \left( (k - t_0 + t)^2 - \g b_{n+1}(t) \right)  \\
 &=  \la^{\frac{5}{2} - \theta} (k-t_0 +t)^2 - \la^{\frac{5}{2} - \theta} \g b_{n+1} (t).
\end{align*}
This yields the initial value problem:
\begin{align*}
&\ddt b_{n+1}(t) + \la^{\frac{5}{2} - \theta} \g b_{n+1}(t) \geq \la^{\frac{5}{2} - \theta} (k- t_0 + t)^2,    \\
&b_{n+1}(t_0 - k + \delta ) \geq 0.
\end{align*}
Apply Gronwall's inequality to find
\begin{align*}
b_{n+1}(t) &\geq b_{n+1}(t_0 - k + \delta) \e^{-\int _{t_0- k + \delta}^t \la^{\frac{5}{2}-\theta}\g \dtau}\\ 
&\hspace{3cm}+ \int_{t_0 - k + \delta}^t \e^{-\int _s^t \la^{\frac{5}{2}-\theta}\g \dtau} \la^{\frac{5}{2} - \theta} (k - t_0 + s)^2 ~\ds \\
&\geq \int_{t_0 - k + \delta}^t \e^{-\la^{\frac{5}{2}-\theta}\g(t-s)} \la^{\frac{5}{2} - \theta} (k - t_0 + s)^2 ~\ds. 
\end{align*}
An application of integration by parts yields
\begin{align*}
b_{n+1}(t)& \ge \la^{\frac{5}{2} - \theta} \e^{-\la^{\frac{5}{2} - \theta} \g t} 
\left[ \left. \frac{(k-t_0+s)^2}{\la^{\frac{5}{2} - \theta} \g}  
\e^{\la^{\frac{5}{2} - \theta} \g s} \right|_{t_0 -k+\delta}^t \right. \\
&\hspace{3cm}
 \left. - \int_{t_0-k+\delta}^t \frac{2(k-t_0+s)}{\la^{\frac{5}{2} - \theta} \g}
\e^{\la^{\frac{5}{2} - \theta} \g s } \ds \right]. \\
\end{align*}
We integrate by parts again to get
\begin{align*}
b_{n+1}(t)&\ge \frac{1}{\g} \e^{-\la^{\frac{5}{2} - \theta}\g t} 
\left[ \left. \left( (k-t_0 +s)^2 \e^{\la^{\frac{5}{2} - \theta}\g s} 
-\frac{2(k-t_0 +s)}{\la^{\frac{5}{2} - \theta} \g} \e^{\la^{\frac{5}{2}- \theta} \g s} \right)\right|_{t_0 -k + \delta}^t  \right. \\
& \hspace{7cm} \left. + \int_{t_0 -k +\delta}^t \frac{2}{\la^{\frac{5}{2} - \theta} \g} \e^{\la^{\frac{5}{2} - \theta} \g s} \ds \right] \\
&= \frac{1}{\g} \e^{-\la^{\frac{5}{2} - \theta}\g t} 
\left[ (k-t_0 +s)^2 \e^{\la^{\frac{5}{2} - \theta}\g s} 
-\frac{2(k-t_0 +s)}{\la^{\frac{5}{2} - \theta} \g} \e^{\la^{\frac{5}{2}- \theta} \g s}  \right. \\
&\hspace{6cm} + \left. \left. \frac{2}{\la^{5-2\theta} \g^2} \e^{\la^{\frac{5}{2}- \theta} \g s} \right] \right|_{t_0 -k + \delta}^t \\
&=  \frac{(k-t_0 + t)^2}{ \g} - 
\frac{2(k-t_0 + t)}{\la^{\frac{5}{2} - \theta} \g^2} + 
\frac{2}{\la^{5 - 2\theta}\g^3}  \\
& \hspace{3cm}- \e^{-\la^{\frac{5}{2}-\theta}\g (t - t_0 + k - \delta)} 
 \left[ \frac{\delta^2}{\g} - 
\frac{2\delta}{\la^{\frac{5}{2} - \theta}  \g^2} + 
\frac{2}{\la^ {5 - 2\theta} \g^3} \right].
 \end{align*}

\noindent Thus we have  
\begin{multline*}
b_{n+1}(t_0) \geq \left[ \frac{k^2}{\g} - 
\frac{2k}{\la^{\frac{5}{2} - \theta} \g^2} + 
\frac{2}{\la^{5 - 2\theta} \g ^3} \right] \\
 -\e^{-\la^{\frac{5}{2}-\theta} \g (k-\delta)} 
 \left[ \frac{\delta^2}{ \g} - 
\frac{2\delta}{\la^{\frac{5}{2} - \theta}  \g^2}+ \frac{2}{\la^ {5 - 2\theta} \g^3} \right] =:B(\delta).
\end{multline*}
As $\delta$ tends to 0,
\[
B(\delta) \rightarrow \left[ \frac{k^2}{\g} - \frac{2k}{\la^{\frac{5}{2} - \theta}\g^2} + 
\frac{2}{\la^{5 - 2\theta} \g ^3} \right] - \frac{2 \e^{-\la^{\frac{5}{2}-\theta} \g k}}{\la^ {5 - 2\theta} \g^3} > 0.447.
\]
So there exists $\delta$ small enough that $B(\delta) \ge 0.447 :=B$, which we will use as the bound on initial condition $b_{n+1}(t_0)$. \\

\noindent \textit{Step 3:} Estimate $b_{n\pm1}(t)$ for  $t_0< t \le T^*$.\\
By our assumptions when $t>t_0$, in particular that $b_{n -2} (t) \leq 1$ and $b_n(t) \geq k$, we get the following inequality from equation \eqref{eq:bj}:
\begin{align*}
&\ddt b_{n-1}(t) \leq \la^{\theta - \frac{5}{2}} \left( 1 - k\g b_{n-1}(t) \right),  \\
&b_{n-1}(t_0) \leq 1.
\end{align*}
Then by Gronwall's inequality,
\begin{align*}
b_{n-1}(t) &\leq b_{n-1}(t_0) \e^{-\int_{t_0}^t \la^{\theta - \frac{5}{2}} k \g \dtau} + \int_{t_0}^t \la^{\theta - \frac{5}{2}} \e^ {-\int_s^t \la^{\theta - \frac{5}{2}} k \g \dtau}
~\mathrm{d}s\\
&\leq \e^{-\la^{\theta - \frac{5}{2}} k \g (t-t_0)} + \int_{t_0}^t \la^{\theta - \frac{5}{2}} \e^ {- \la^{\theta - \frac{5}{2}} k \g (t-s)} ~\mathrm{d}s\\
& = \e^{-\la^{\theta - \frac{5}{2}} k \g (t-t_0)} \left( 1 - \frac{1}{k \g}   \right) + \frac{1}{k\g} =: \hat{b}_{n-1}(t).
\end{align*}
By our assumptions when $t_0< t \le T^*$, in particular that $b_{n +2}(t) \leq 1$ and $b_n(t) \geq k$, we get the following inequality from equation \eqref{eq:bj}:
\begin{align*}
&\ddt b_{n+1}(t) \geq \la^{\frac{5}{2} - \theta} \left(k^2 - \g b_{n+1}(t)\right), \\
&b_{n+1} (t_0) \geq B .
\end{align*}
Then by Gronwall's inequality,
\begin{align*}
b_{n+1}(t) &\geq b_{n+1}(t_0) \e^{-\int_{t_0}^t \la^{\frac{5}{2} - \theta} \g \dtau} + \int_{t_0}^t \e^{-\int_s^t \la^{\frac{5}{2} - \theta} \g \dtau} \la^{\frac{5}{2} - \theta} k^2 ~\ds \\ 
&\geq B \e^{- \la^{\frac{5}{2} - \theta} \g (t-t_0)} + \int_{t_0}^t \e^{- \la^{\frac{5}{2} - \theta} \g (t-s)} \la^{\frac{5}{2} - \theta} k^2 ~\ds \\ 
&=  B \e^{- \la^{\frac{5}{2} - \theta} \g (t-t_0)} + \left. \frac{k^2}{\g}\e^{- \la^{\frac{5}{2} - \theta} \g t} \e^{ \la^{\frac{5}{2} - \theta} \g s} \right|_{t_0}^t \\
&= \e^{-\la^{\frac{5}{2} - \theta} \g (t-t_0)} \left( B -  \frac{k^2}{\g} \right) +  \frac{k^2}{\g} =: \tilde{b}_{n+1}(t).
\end{align*}

\noindent \textit{Step 4:} Estimate $b_n(t)$ for $t_0< t \le T^*$.\\
We use the bounds on $b_{n\pm1}(t)$ from above to find an upperbound on $b_n(t)$:
\begin{align*}
&\ddt b_n(t) = b_{n-1}^2(t) - \g b_n(t) b_{n+1}(t) \le \hat{b}_{n-1}^2(t) - \g b_n(t) \tilde{b}_{n+1}(t),   \\
&b_n (t_0) \le k.  
\end{align*}
Another application of Gronwall's inequality yields
\begin{align*}
b_n(t) &\le k \e^{-\int_{t_0}^t \g \tilde{b}_{n+1} \dtau} + \int_{t_0}^t \e^{-\int_s^t \g \hat{b}_{n+1} \dtau} \hat{b}_{n-1}^2 \ds \\
&= k \e^{-\int_{t_0}^t \g \e^{-\la^{\frac{5}{2} - \theta} \g (\tau-t_0)} \left( B -  \frac{k^2}{\g} \right) +  k^2 \dtau} \\
&+ \int_{t_0}^t \e^{-\int_s^t \g \e^{-\la^{\frac{5}{2} - \theta} \g (\tau-t_0)} \left( B -  \frac{k^2}{\g} \right) +  k^2 \dtau} \left( \e^{-\la^{\theta - \frac{5}{2}} k \g (s-t_0)} \left( 1 - \frac{1}{k \g}   \right) + \frac{1}{k\g} \right)^2 \ds \\
&= k \e^{ 
 \la^{\theta - \frac{5}{2}}  \left( B -  \frac{k^2}{\g} \right) \left( \e^{-\la^{\frac{5}{2} - \theta} \g (t-t_0)} -1 \right) - k^2(t-t_0)     
 } \\
 &+ \int_{t_0}^t  \e^{  \la^{\theta - \frac{5}{2}} \left( B -  \frac{k^2}{\g} \right) \left( \e^{-\la^{\frac{5}{2} - \theta} \g (t-t_0)} -\e^{-\la^{\frac{5}{2} - \theta} \g (s-t_0)} \right)  - k^2(t-s) } \\
 & \hspace{4cm} \cdot
 \left( \e^{-\la^{\theta - \frac{5}{2}} k \g (s-t_0)} \left( 1 - \frac{1}{k \g}   \right) + \frac{1}{k\g} \right)^2 \ds \\
 &=: \beta (t).
\end{align*}
Then 
\begin{align*}
\ddt \beta(t) &=  k \e^{ \la^{\theta - \frac{5}{2}}  \left( B -  \frac{k^2}{\g} \right) \left( \e^{-\la^{\frac{5}{2} - \theta} \g (t-t_0)} -1 \right) - k^2(t-t_0)} 
\left( \left( k^2 - B \g \right) \e^{-\la^{\frac{5}{2} - \theta} \g (t-t_0)} -k^2 \right)  \\
 & + \left( \e^{-\la^{\theta - \frac{5}{2}} k \g (t-t_0)} \left( 1 - \frac{1}{k \g}   \right) + \frac{1}{k\g} \right)^2 \\ 
 &+ \left( \left( k^2 - B \g \right) \e^{-\la^{\frac{5}{2} - \theta} \g (t-t_0)} -k^2 \right)\int_{t_0}^t  \e^{  \la^{\theta - \frac{5}{2}} \left( B -  \frac{k^2}{\g} \right) \left( \e^{-\la^{\frac{5}{2} - \theta} \g (t-t_0)} -\e^{-\la^{\frac{5}{2} - \theta} \g (s-t_0)} \right)  - k^2(t-s) } \\
 & \hspace{6cm} \cdot  \left( \e^{-\la^{\theta - \frac{5}{2}} k \g (s-t_0)} \left( 1 - \frac{1}{k \g}   \right) + \frac{1}{k\g} \right)^2 \ds. \\
 \end{align*}
The exponent $\la^{\theta - \frac{5}{2}} \left( B -  \frac{k^2}{\g} \right)\left( \e^{-\la^{\frac{5}{2} - \theta} \g (t-t_0)} -\e^{-\la^{\frac{5}{2} - \theta} \g (s-t_0)} \right)$ is nonnegative, thus 
\begin{align*}
\ddt \beta(t) & \le \left( \e^{-\la^{\theta - \frac{5}{2}} k \g (t-t_0)} \left( 1 - \frac{1}{k \g}   \right) + \frac{1}{k\g} \right)^2 \\
&+ \left( \left( k^2 - B \g \right) \e^{-\la^{\frac{5}{2} - \theta} \g (t-t_0)} -k^2 \right) \int_{t_0}^t  \frac{1}{(k \g)^2} \e^{-k^2(t-s)} \ds \\
&= \left( \e^{-\la^{\theta - \frac{5}{2}} k \g (t-t_0)} \left( 1 - \frac{1}{k \g}   \right) + \frac{1}{k\g} \right)^2 \\
&+ \left( \left( 1-  \frac{B \g}{k^2} \right) \e^{-\la^{\frac{5}{2} - \theta} \g (t-t_0)} -1 \right) \frac{1}{(k \g)^2} \left( 1 - \e^{-k^2(t-t_0)} \right) \\
& = \e^{-2\la^{\theta - \frac{5}{2}} k \g (t-t_0)} \left( 1 - \frac{1}{k \g}   \right)^2 + \frac{2}{k\g} \left( 1 - \frac{1}{k \g}   \right)  \e^{-\la^{\theta - \frac{5}{2}} k \g (t-t_0)}  + \frac{1}{(k\g)^2} \e^{-k^2(t-t_0)}  \\
&+ \frac{1}{(k\g)^2}  \left( 1-  \frac{B \g}{k^2} \right) \e^{-\la^{\frac{5}{2} - \theta} \g (t-t_0)} -\frac{1}{(k\g)^2} \left( 1-  \frac{B \g}{k^2} \right) \e^{-(\la^{\frac{5}{2} - \theta} \g +k^2) (t-t_0)}. \\
\end{align*}
We have shown exponential decay for the derivative $\beta'(t)$ and thus it suffices to show $\beta(t) <1$ on a finite interval, which can be accomplished easily numerically since $\beta(t)$ is given explicitly. Hence $b_n(t) < 1$ for all $t>0$, which contradicts our assumption that $b_n(t)$ is the first $b_j$ that crosses 1. Thus $b_j(t) < 1$ for any $j \in \N$ and for all $t>0$. The conclusion extends to $c_j(t)$.
\end{proof}

This leads to our main result:
\begin{thm} \label{thm:aj}
Let $a(t)$ be a positive solution to \eqref{eq:aj} such that 
\[
\sup_j  \la_j^\theta a_j (0) = M 
\]
for some $M< \infty$, then
\[
\sup_j  \la_j^\theta a_j (t) < \frac{M}{\delta} 
\]
for all $t> 0$.
\end{thm}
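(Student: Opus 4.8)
The plan is to obtain Theorem \ref{thm:aj} as a corollary of Theorem \ref{thm:cj}, using the scaling symmetry from Lemma \ref{lem:scaling} to normalize away the size of the initial data. Theorem \ref{thm:cj} controls any positive solution whose normalized variables start below the universal threshold $\delta$, whereas here the hypothesis is only that $\sup_j \la_j^\theta a_j(0) = M < \infty$, with $M$ possibly large. Writing $C := \la^{2\theta - \frac{5}{2}}$ so that $c_j(t) = C \la_j^\theta a_j(t)$ as in \eqref{eq:cj}, the assumption becomes $\sup_j c_j(0) = CM$, which need not be small; hence Theorem \ref{thm:cj} does not apply directly to $a$ itself.

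First I would pass to the rescaled solution $\tilde a(t) := \eta a(\eta t)$, which solves \eqref{eq:aj} by Lemma \ref{lem:scaling} and is again positive with $l^2$ initial data. From the change of variables one computes that the associated normalized variables satisfy $\tilde c_j(t) = \eta c_j(\eta t)$, so in particular $\tilde c_j(0) = \eta c_j(0)$. I would then select
\[
\eta = \frac{\delta}{CM},
\]
so that $\sup_j \tilde c_j(0) = \eta CM = \delta$; thus $\tilde c_j(0) \le \delta < 1$ for every $j$, which is precisely the hypothesis of Theorem \ref{thm:cj} applied to $\tilde a$.

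Theorem \ref{thm:cj} then gives $\tilde c_j(t) < 1$ for all $j$ and all $t > 0$, i.e. $\eta c_j(\eta t) < 1$. Substituting $s = \eta t$, which sweeps out all of $(0,\infty)$ because $\eta > 0$, yields $c_j(s) < 1/\eta$ for every $j$ and every $s > 0$. Converting back through $c_j(s) = C \la_j^\theta a_j(s)$ and inserting the chosen value of $\eta$ gives
\[
\la_j^\theta a_j(s) < \frac{1}{C\eta} = \frac{M}{\delta}
\]
for every $j$ and every $s > 0$, so that $\sup_j \la_j^\theta a_j(s) \le M/\delta < \infty$, which is the bound asserted in Theorem \ref{thm:aj}.

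There is no genuine analytic obstacle at this stage: the entire difficulty is carried by Theorem \ref{thm:cj}, and this statement is a clean scaling corollary. The only points demanding attention are bookkeeping the constant $C = \la^{2\theta - \frac{5}{2}}$ consistently through both \eqref{eq:cj} and the scaling of Lemma \ref{lem:scaling}, and confirming that $\tilde a$ remains a positive $l^2$ solution so that Theorem \ref{thm:cj} genuinely applies. The one loose end is that Theorem \ref{thm:cj} delivers $c_j < 1$ pointwise in $j$, which directly gives the strict inequality $\la_j^\theta a_j(s) < M/\delta$ for each fixed $j$ but a priori only $\sup_j \la_j^\theta a_j(s) \le M/\delta$ for the supremum; recovering the strict inequality as written would require a uniform-in-$j$ version of the bound extracted from the proof of Theorem \ref{thm:cj}.
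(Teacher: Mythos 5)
Your proposal is correct and follows essentially the same route as the paper: rescale by $\tilde a(t)=\eta a(\eta t)$ via Lemma \ref{lem:scaling} with $\eta$ chosen to bring the normalized initial data below $\delta$, apply Theorem \ref{thm:cj}, and undo the scaling. You are in fact slightly more careful than the paper in carrying the prefactor $\la^{2\theta-\frac{5}{2}}$ from the change of variables \eqref{eq:cj} and in noting that the pointwise bound $c_j<1$ only yields $\le$ for the supremum; neither point affects the substance of the argument.
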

\begin{proof}
By the above Lemma \ref{lem:scaling}, we have that if $a_j(t)$ solves  \eqref{eq:aj} with $a_j(0) = a_j^0$, then $\tilde{a}_j(t) = \eta a_j(\eta t)$ is a solution to \eqref{eq:aj} with initial condition $\tilde{a}_j(0) = \eta a_j(0) = \eta a_j^0$. In particular, this is true for $\eta = \frac{\delta}{M}$.  Since $\sup_j  \la_j^\theta a_j(0) = M$, we have 
\[
\sup_j \la_j^\theta  \tilde{a}_j(0) = \eta M = \frac{\delta}{M} M = \delta.
\]
Define
\[
b_j(t) : = \la_j^\theta  \tilde{a}_j (t) .
\]
Then we have 
\[
\sup_j b_j(0) < \delta.
\]
Given such an upper bound on the initial condition of $b_j(t)$, then recall that the theorem above yields 
\[ 
b_j(t) < 1  
\]
for all $t>0$.  Then
\[
\sup_j b_j(t) =  \la_j^\theta  \tilde{a}_j (t) = \la_j^\theta \eta a_j (t)  < 1 .
\]
Therefore
\[
 \sup_j  \la_j^\theta a_j(t) < \frac{1}{\eta} = \frac{M}{\delta} 
\]
for all $t>0$.
\end{proof}

Similar to Theorem 10 in \cite{MR3057168}, we obtain the following 
\begin{cor} \label{cor:decay}
There exists a constant $k(\theta) > 0$ such that 
\[
\sup_j  \la_j^\theta a_j(t) < k(\theta) |a(0)|^\frac{2}{3} t^{-\frac{1}{3}} , \hspace{5mm} \mathrm{for~all} ~~t>0
\]
for every positive solution $a(t)$ of \eqref{eq:aj} with $a(0)$ in $H$.
\end{cor}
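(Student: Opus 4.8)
The plan is to combine the uniform bound of Theorem \ref{thm:aj} with the global energy budget, after first using the scaling symmetry to normalize the energy. Write $f(t) := \sup_j \la_j^\theta a_j(t)$ and $N := |a(0)|$, and recall from Lemma \ref{lem:scaling} that if $a$ solves \eqref{eq:aj} then so does $\eta a(\eta \cdot)$; under this map one has $f_{\eta a(\eta\cdot)}(s)=\eta f(\eta s)$ and $N\mapsto \eta N$, so the claimed bound $f(t)<k(\theta)N^{2/3}t^{-1/3}$ is scale invariant. Choosing $\eta=1/N$ therefore reduces the statement to the single normalization $N=1$: it suffices to show that every positive unit-energy solution obeys $f(t)<k(\theta)\,t^{-1/3}$ for all $t>0$.

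The source of the decay is the forward flux $\Pi_n := \la_n^{\frac52} a_n^2 a_{n+1}\ge 0$, and first I would record an energy bound on its time integral. Differentiating the tail energy $T_n(t):=\sum_{j>n}a_j^2(t)$ and telescoping exactly as in the proof of Theorem \ref{thm:energy} gives $\frac{d}{dt}T_n = 2\Pi_n - 2\varepsilon_\infty$, where $\varepsilon_\infty = \lim_J \la_J^{\frac52}a_J^2 a_{J+1}\ge 0$ is the dissipation to infinity. Since $T_n(t)\le N^2$ and the total dissipation over $[0,t]$ is at most $\tfrac12 N^2$, each $\int_0^t \Pi_n\,\dtau\le N^2$. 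Summing against a summable weight then yields, for any fixed $\varepsilon\in(0,\tfrac{7}{10})$, the single inequality
\[
\sum_{n}\la_n^{-\varepsilon}\int_0^t \Pi_n(\tau)\,\dtau \le C_\varepsilon N^2 = C_\varepsilon,
\]
which is what lets me handle a leading mode that migrates in $n$ over time.

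Next I would bound $\Pi_n$ from below at the mode carrying the supremum. If $f(s)$ is (nearly) attained at mode $n$, then $a_n(s)\sim f(s)\la_n^{-\theta}$, and the forward cascade should force the neighbor to be comparable, $a_{n+1}(s)\gtrsim f(s)\la_n^{-\theta}$, giving $\Pi_n(s)\gtrsim f(s)^3\la_n^{\frac52-3\theta}=f(s)^3\la_n^{7/10}$. Suppose, for contradiction, that $f(s)>L:=Ct^{-1/3}$ for every $s\in(0,t)$, with $C$ a large constant. Inserting this lower bound, using $\la_n\ge 1$ and $\tfrac{7}{10}-\varepsilon>0$, the left side of the displayed inequality is at least a constant multiple of $L^3 t = C^3$, contradicting the bound $\le C_\varepsilon$ once $C$ is large. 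Hence there is a time $s_0\in(0,t)$ with $f(s_0)\le Ct^{-1/3}$. Applying Theorem \ref{thm:aj} with initial time $s_0$ (the system is autonomous) propagates this to $f(t)<f(s_0)/\delta\le (C/\delta)t^{-1/3}$, giving the corollary with $k(\theta)=C/\delta$.

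The main obstacle is the flux lower bound, specifically the comparability $a_{n+1}\gtrsim f\la_n^{-\theta}$ of the neighbor to the leading mode: the supremum constraint only supplies the matching upper bound $a_{n+1}\le f\la_{n+1}^{-\theta}$, and a purely instantaneous lower bound can fail if the supremum is momentarily isolated in $n$. Establishing it requires the dynamical input that a large mode $n$ quickly feeds mode $n+1$ — precisely the mechanism quantified in Step 2 of the proof of Theorem \ref{thm:cj}, where $b_{n+1}(t_0)\ge B$ — now needed in the small-amplitude regime and on the natural time scale $\la_n^{\theta-\frac52}$. Any weaker estimate, e.g. $a_{n+1}\gtrsim \la_n^{-\theta}$ without the factor $f$, degrades the flux to order $f^2$ and produces the wrong rate $t^{-1/2}$, so recovering the cube of $f$ is exactly where the argument must be sharp.
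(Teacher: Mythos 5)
Your overall architecture matches the paper's: both arguments first locate a single ``good'' time $s_0\in(0,t]$ at which $\sup_j\la_j^\theta a_j(s_0)\lesssim |a(0)|^{2/3}t^{-1/3}$, and then propagate that bound forward to time $t$ via Theorem \ref{thm:aj}, losing only a factor $1/\delta$. The difference lies in how the good time is produced, and that is where your proof has a genuine gap --- one you candidly flag yourself. The paper does not attempt a pointwise lower bound on the flux at the supremum-carrying mode; it quotes the measure estimate of Barbato and Morandin \cite{MR3057168},
\[
\mathscr{L}\left\{ t>0 : a_j(t)>l_j \ \text{for some } j\right\} \le f(\theta)\,|a(0)|^2 \sum_{j\ge1}\frac{1}{\la_j^{5/2}l_j^3},
\]
valid for any positive non-increasing sequence $(l_j)$, and applies it with $l_j = c\,|a(0)|^{2/3}\la_j^{-\theta}t^{-1/3}$; since $\sum_j\la_j^{3\theta-5/2}<\infty$ for $\theta=\frac35$, the bad set has measure strictly less than $t$, so a good time exists. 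All of the dynamical work (mode $n$ feeding mode $n+1$) is already packaged, time-integrated, inside that cited estimate.

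Your substitute for this step does not close. The inequality $\Pi_n(s)\gtrsim f(s)^3\la_n^{7/10}$ at the mode $n$ realizing the supremum requires $a_{n+1}(s)\gtrsim f(s)\la_n^{-\theta}$, and, as you note, the supremum only supplies the reverse inequality: at a fixed instant $a_{n+1}$ can be arbitrarily small while $a_n$ is large (for instance just after the supremum arrives at a fresh mode), so the pointwise lower bound is simply false. The mechanism of Step 2 of Theorem \ref{thm:cj} does force $a_{n+1}$ to become comparable, but only after a waiting time of order $\la_n^{\theta-\frac52}$, during which the supremum may migrate to a different mode; your weighted sum $\sum_n\la_n^{-\varepsilon}\int_0^t\Pi_n\,\dtau$ does not obviously survive that bookkeeping. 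And, as you also observe, any weakened bound without the full factor $f(s)$ on $a_{n+1}$ degrades the flux to order $f^2$ and yields only $t^{-1/2}$. So the proposal is an honest sketch of a plausible route, but the decisive estimate is missing; the efficient repair is to invoke (or reprove) the Barbato--Morandin measure bound, which is precisely the time-integrated, rigorous form of the ``large $a_n$ forces large flux'' heuristic your argument needs.
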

\begin{proof}
By Theorem \ref{thm:aj}, 
\[
\sup_j  \la_j^\theta a_j (t) < \frac{1}{\delta}  \sup_j \la_j^\theta a_j (s)
\]
for all $s \in [0, t]$. By \cite{MR3057168}, there exists a constant $f(\theta)>0$ such that 
\[
\mathscr{L}\left\{ t > 0 : a_j (t) > l_j ~\mathrm{for~some}~j  \right\} \le  f(\theta) |a(0)|^2 \sum_{j = 1}^\infty \frac{1}{\la_j^\frac{5}{2} l_j^3}, 
\]
where $\mathscr{L}$ denotes the Lebesgue measure and $(l_j)_{j\ge 1}$ is any positive, non-increasing sequence. 
Let 
\[
l_j = \frac{ f(\theta)^\frac{1}{3} M^\frac{1}{3} |a(0)| ^\frac{2}{3} }{\la_j^\theta t^\frac{1}{3}} ,
\]
where $M$ is such that
\[
\sum_{j=1}^\infty \la_j^{3\theta - \frac{5}{2}} < M.
\]
This series converges since $3\theta - \frac{5}{2} < 0$ by \eqref{eq:theta}.
Then we get 
\begin{align*}
\mathscr{L} \left\{ s> 0 :  a_j(s) > l_j  ~\mathrm{for~some}~j  \right\} &\le f(\theta) |a(0)|^2 \sum_{j=1}^\infty  \frac{1}{\la_j^\frac{5}{2}} \frac{\la_j^{3\theta} t}{f(\theta) |a(0)|^2 M} \\
& = \frac{1}{M} \sum_{j=1}^\infty \la_j^{3\theta - \frac{5}{2}}t \\
&< t.
\end{align*}
Thus for some $s\in [0, t]$, we have $a_j(s) \le l_j$ for all $j$.  Thus
\[
\la_j^\theta a_j(s) \le f(\theta)^\frac{1}{3} M^\frac{1}{3} |a(0)|^\frac{2}{3} t^{-\frac{1}{3}} 
\]
for all $j$. Then
\begin{align*}
\sup_j \la_j^\theta a_j(s) &\le f(\theta)^\frac{1}{3} M^\frac{1}{3} |a(0)|^\frac{2}{3} t^{-\frac{1}{3}} \\
& = k(\theta) |a(0)|^\frac{2}{3} t^{-\frac{1}{3}}, ~~~~~~k~\mathrm{constant~depending~on~}~\theta ,
\end{align*}
which yields the result
\[
\sup_j \la_j^\theta a_j(t) \le \frac{1}{\delta}k(\theta) |a(0)|^\frac{2}{3} t^{-\frac{1}{3}}
\]
for all $t>0$, as desired.
\end{proof}

\bibliography{Dyadic_Template}{}
\bibliographystyle{plain}

\end{document}